\newcommand{\lcr}{\raisebox{-5pt}{\mbox{}\hspace{1pt}
                  \epsfig{file=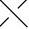}\hspace{1pt}\mbox{}}}
\newcommand{\ift}{\raisebox{-5pt}{\mbox{}\hspace{1pt}
                  \epsfig{file=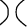}\hspace{1pt}\mbox{}}}
\newcommand{\zer}{\raisebox{-5pt}{\mbox{}\hspace{1pt}
    \epsfig{file=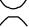}\hspace{1pt}\mbox{}}}
\newtheorem{theorem}{Theorem}[section]
\newtheorem{lemma}[theorem]{Lemma}
\newtheorem{proposition}[theorem]{Proposition}
\theoremstyle{definition}
\theoremstyle{remark}
\newtheorem{remark}[theorem]{Remark}
\numberwithin{equation}{section}
\title[Recursive relation in complement of $(2p+1,2)$ torus knot]{A recursive relation in the complement of the $(2p+1,2)$ torus knot}
\author{Sunday Esebre}
\address{Department of Mathematics and Statistics, 
Texas Tech University, Lubbock, TX 79409}
\email{sunday.esebre@ttu.edu}
\author{R{\u{a}}zvan Gelca}
\address{Department of Mathematics and Statistics, 
Texas Tech University, Lubbock, TX 79409}
\email{rgelca@gmail.com}
\subjclass{57M27, 81T45}
\keywords{colored Jones polynomials, Kauffman bracket, skein modules}
\begin{document}
\maketitle

\begin{abstract}
  It is known that the colored Jones polynomials of a knot in the 3-dimensional
  sphere satisfy recursive relations, it is also known that
  these recursive relations come from recurrence polynomials which have
  been related, by the AJ conjecture, to the geometry of the knot complement.
  In this paper we propose a new line of thought, by extending the
  concept of colored Jones polynomials to knots in  the 3-dimensional manifold
  such as  a knot complement, and then examining the case of one particular
  knot in the complement of the $(2p+1,2)$ torus knot for which an analogous
  recursive relation exists, and moreover, this relation
  has an associated recurrence polynomial. Part of our study consists of
  the writing  in the standard basis of the genus two handlebody of
  two families of skeins in this handlebody. 
\end{abstract}

\section{Introduction}

In 1984 V.F.R Jones has discovered a polynomial invariant for
knots in the 3-dimensional sphere \cite{jones}.
E. Witten has explained the Jones polynomial using a quantum field theory
whose action functional is the Chern-Simons functional \cite{witten},
showing that the
Jones polynomial evaluated at a root of unity is the expected value
of the trace of the
holonomy along the knot of an $su(2)$-connection, which holonomy
is computed in the fundamental representation of $SU(2)$. Witten has brought
to  attention the same expected value computed for other possible
representations of  $SU(2)$, among which a special role is played by
the irreducible representations.
And as there is one irreducible representation of $SU(2)$
of dimension $n+1$ for each $n\geq 0$, there is a corresponding knot invariant,
called the $n$th colored Jones polynomial of the knot.
The colored Jones polynomials of knots have been constructed rigorously
in \cite{kirillovreshetikhin} and \cite{reshetikhinturaev} 
using quantum groups.  There exists a slightly modified version of
Witten's theory, based on the Kauffman bracket \cite{kauffman}, which
can be found in \cite{kauffmanlins}, with its own version of
colored Jones polynomials, what we prefer to call the colored Kauffman
brackets.

The combinatorial nature of Witten's Chern-Simons theory is expressed
in skein relations, and these skein relations have led J. Przytycki
to introduce the concept of a skein module in an attempt to capture the
combinatorial aspects of Chern-Simons theory \cite{przytycki}. Our focus is
on the Kauffman bracket skein modules. The Kauffman bracket skein module
of a 3-dimensional oriented manifold $M$ is defined as follows.
Let $\mathcal{L}$ be the set of isotopy classes of framed links in 
the manifold $M$, including the empty link. 
Consider the free ${\mathbb C}[t,t^{-1}]$-module with basis $\mathcal{L}$,
and factor it by the smallest subspace containing all expressions
of the form $\displaystyle{\lcr-t\zer-t^{-1}\ift}$
and 
$\bigcirc+t^2+t^{-2}$, where the links in each expression are
identical except in a ball in which they look like depicted.
The resulting  quotient is the Kauffman bracket skein module
of $M$, denoted by $K_t(M)$. 

It is in the context of Kauffman bracket skein modules that Ch. Frohman has
discovered that the colored Jones polynomials (or rather the colored Kauffman
brackets) of a knot in the 3-dimensional sphere are related to one
another \cite{frgelo}. The relation was expressed as an ``orthogonality''
between the vector with entries the colored Jones polynomials and
a vector computed from a deformed version of the A-polynomial.
The orthogonality relation was further interpreted as a linear recursive
relation for colored Jones polynomials by the second author in
\cite{gelcaproc}. This research has been further refined by S. Garoufalidis
and T.T.Q. Le in \cite{garle} using the concept of $q$-holonomicity, to
show that such recursive relations are build in the very definition
of the colored Jones polynomials. In this context they introduced
the concepts of recurrence polynomials and recurrence ideals for
the colored Jones polynomials of knots. It is important to point out
that the Garoufalidis-Le theory is for the actual colored Jones polynomials
in the way they arise in the Reshetikhin-Turaev theory based on quantum groups,
and not for their slightly
modified Kauffman bracket versions.

In this paper we advance  the problem of finding relations among colored
Jones polynomials to a different setting.
We do this with the example of one knot in the complement of the
$(2p+1,2)$ torus knot. 
Let us recall that it was shown in \cite{bullock} that the Kauffman bracket skein module of the complement in the three dimensional sphere of a regular neighborhood of the  $(2p+1,2)$ torus
knot (in short the complement of the $(2p+1,2)$ torus knot) is a free ${\mathbb C}[t,t^{-1}]$-module with a basis consisting of the
  skeins $$x^my^n,  \quad m\geq 0,\quad  0\leq n\leq p,$$  where $x$ and $y$ are the curves shown in
  Figure~\ref{basistorknot}. Our convention here and throughout the paper is
  that we use the blackboard framing of curves, and that the monomial
  $x^my^n$ means the multicurve consisting of $m$ parallel copies of $x$ and
  $n$ parallel  copies of $y$.
\begin{figure}[h]
  \centering
\scalebox{.4}{\input{basistorknot.pstex_t}}

\caption{}
\label{basistorknot}
\end{figure}

  We recall the two families of (normalized) Chebyshev polynomials, those of
  the first kind, $T_n(\xi)$, defined by $T_n(2\cos \theta)=2\cos n\theta$,
  and those of the second kind, $S_n(\xi)$, defined
  by $S_n(2\cos\theta)=\sin (n+1)\theta/\sin \theta$, where $n$ ranges
  over all integers, positive and negative.
  
  For the Kauffman bracket skein module of the complement of the torus knot
  it is then  sensible to  use  the basis $$S_m(x)S_n(y), \quad m\geq 0, \quad 0\leq n\leq p,$$ 
   motivated by the fact that the polynomial $S_n(\xi)$ is the character of the 
  $n+1$st irreducible representation of the Lie group $SU(2)$, and so $S_m(x)S_n(y)$ would correspond,
  in the Kauffman bracket picture, to the link $x\cup y$ decorated by the $m+1$st and $n+1$st irreducible
  representations of this group (or of its quantum version).   

  The present paper is focussed on just one example, and this example is the
  knot in the complement of the $(2p+1,2)$ torus knot that  is the curve $y$
  (endowed with the blackboard framing).  
Our main result is the following:

\begin{theorem}\label{mainthm}
  For all $n\in {\mathbb Z}$, the following identity holds in
  the Kauffman bracket skein module of the complement of the $(2p+1,2)$ torus
  knot
  \begin{eqnarray*}
t^{-2n-1}S_{p+n}(y)+t^{2n+1}S_{p-n-1}(y)=(-1)^nS_{2n}(x)(tS_{p-1}(y)+t^{-1}S_p(y)). 
    \end{eqnarray*}
    \end{theorem}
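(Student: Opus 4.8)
The plan is to reduce the identity to a computation inside the Kauffman bracket skein module of a genus two handlebody. Since the $(2p+1,2)$ torus knot has tunnel number one, its complement $M$ is obtained from a genus two handlebody $H$ by attaching a single $2$-handle along a simple closed curve $\gamma\subset\partial H$ carrying the torus knot relator $a^2b^{-(2p+1)}$ in $\pi_1(H)=\langle a,b\rangle$; accordingly it is standard that $K_t(M)$ is the quotient of $K_t(H)$ by the submodule of handle-slide relations along $\gamma$, while $K_t(H)$ is itself the polynomial algebra over $\mathbb C[t,t^{-1}]$ on the three boundary curves of the pair of pants $P$ with $H\cong P\times I$, with standard basis $\{S_i(z_1)S_j(z_2)S_k(z_3)\}$. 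In this model the curves $x$ and $y$ of Figure~\ref{basistorknot} become explicit curves in $H$, and both sides of the stated identity become concrete elements of $K_t(H)$.

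Before the skein theory I would make an algebraic simplification. Set $F_n:=t^{-2n-1}S_{p+n}(y)+t^{2n+1}S_{p-n-1}(y)$ and note $F_0=tS_{p-1}(y)+t^{-1}S_p(y)$, so that the theorem is equivalent to the single statement $F_n=(-1)^nS_{2n}(x)\,F_0$ for all $n\in\mathbb Z$. Both $F_n$ and $(-1)^nS_{2n}(x)F_0$ are manifestly invariant under $n\mapsto -1-n$, and both equal $F_0$ at $n=0$; moreover the sequence $(-1)^nS_{2n}(x)$ satisfies the three term recurrence $u_{n+1}+u_{n-1}+T_2(x)u_n=0$, immediate from $T_2(x)S_{2n}(x)=S_{2n+2}(x)+S_{2n-2}(x)$ and $T_2=S_2-S_0$. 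Hence the whole theorem follows by induction (forward and backward in $n$, with $F_{-1}=F_0$ as the second initial value) from the one identity
\[
F_{n+1}+F_{n-1}+T_2(x)F_n=0\qquad\text{in }K_t(M),
\]
which by the symmetry $F_n=F_{-1-n}$ only has to be checked for $n\ge 0$. This is the crux.

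To prove that identity I would expand in the standard basis of $K_t(H)$ the two families of skeins out of which its left side is built: the family $\{S_{p+k}(y)\}$ and the family $\{T_2(x)S_{p+k}(y)\}$ --- the ``two families of skeins in the genus two handlebody'' of the abstract. Drawn inside $H$ these skeins acquire crossings coming from the way $x$, $y$ and $\gamma$ sit relative to the handle structure; resolving them via the Kauffman bracket relation produces, step by step, the powers $t^{\pm(2n+1)}$ and the index shifts, while the factor $T_2(x)$ is folded in using the product-to-sum rule $S_mS_n=\sum_{k}S_{m+n-2k}$ in the pair of pants skein algebra. Then I would apply the handle-slide relations along $\gamma$: the relevant sub-multicurves can be isotoped across the cocore disk of the $2$-handle, and this rewrites both expansions inside the basis $\{S_m(x)S_n(y):m\ge 0,\ 0\le n\le p\}$ of $K_t(M)$ from \cite{bullock}. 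Comparing coefficients, the asserted vanishing reduces to elementary Chebyshev identities --- $T_2S_n=S_{n+2}+S_{n-2}$, the product-to-sum formula, and the reflection $S_{-n}=-S_{n-2}$ --- together with bookkeeping of the framing-induced powers of $t$.

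The hard part will be that last paragraph: the explicit rewriting of the two skein families in the standard genus two handlebody basis, and above all the careful tracking, through every crossing resolution and every slide across the $2$-handle, of the blackboard-framing corrections, i.e.\ the powers of $t$. That is precisely where the asymmetric exponents $t^{-2n-1}$ and $t^{2n+1}$ of the statement are produced, and where one sign or exponent slip would invalidate everything. By contrast, the reduction to the three term recurrence and the closing Chebyshev manipulations are routine.
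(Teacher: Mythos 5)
Your algebraic reduction is correct and tidy: with $F_n=t^{-2n-1}S_{p+n}(y)+t^{2n+1}S_{p-n-1}(y)$ one has $F_0=F_{-1}=tS_{p-1}(y)+t^{-1}S_p(y)$, the sequence $(-1)^nS_{2n}(x)$ satisfies $u_{n+1}+u_{n-1}+T_2(x)u_n=0$, and so the theorem is equivalent to the single family of identities $F_{n+1}+F_{n-1}+T_2(x)F_n=0$ in the skein module (this is exactly the homogeneous recursion that appears in \S\ref{recforjones}, there derived as a \emph{consequence} of the theorem). But this reduction relocates rather than removes the difficulty, and the paragraph in which you propose to discharge it is not a proof: it is a promise to ``resolve crossings,'' ``apply the handle-slide relations along $\gamma$,'' and ``compare coefficients,'' without identifying which skein in the handlebody is actually being slid, what its expansion in the basis $S_m(x)S_n(y)S_k(z)$ is, or why the output assembles into precisely $F_{n+1}+F_{n-1}+T_2(x)F_n$. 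There is also a circularity hazard: verifying an identity involving $S_{p+n+1}(y)$, $S_{p+n}(y)$, $S_{p+n-1}(y)$ for arbitrary $n$ requires knowing how these non-basis skeins reduce modulo the $2$-handle relations, and that reduction is essentially the content of the theorem itself; you must exhibit an independent supply of relations.

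The paper's proof supplies exactly this missing input. Its geometric core is the single handle-slide identity $\overline{X_1*T_n(y)}=T_n(y)\,\overline{X_{2p}}$ of Figure~\ref{handleslide}, where $X_1*T_n(y)$ and $X_{2p}$ are specific skeins in the genus two handlebody whose expansions in the standard basis are computed beforehand (Theorem~\ref{formforXY} and the closed form for the $X_i$). Written out, this identity is a relation that, like the recursion in the statement of the theorem, determines $S_n(y)$ for $n>p$ from $S_0(y),\ldots,S_p(y)$; the proof then shows that the sequence defined by the theorem's recursion satisfies the handle-slide relation, by an induction anchored on the cases $0\le n\le p+1$ already established in \cite{gelcasain}. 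To complete your plan you would need an analogous concrete identity --- either the one from Figure~\ref{handleslide} or some other explicit consequence of sliding over the $2$-handle --- together with the basis expansions that make it computable, plus an anchor for the low values of $n$. As it stands, the crux of your argument is exactly the part you have left blank.
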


This formula was noticed by J. Sain and
proved for $0\leq n\leq p+1$ in \cite{gelcasain}, but unfortunately that proof does not extend to other $n$. In this paper we
use some results of the second author and his collaborators
about the Kauffman bracket skein module of the genus two handlebody
to find a different proof that works
in general. In the process we address a problem raised
by R.P. Bakshi and J. Przytycki in conjuction with their work
on connected sums of handlebodies from \cite{rheajozef},
the problem being about expressing a
certain skein in
the genus two handlebody in terms of the standard basis elements. 

By using Theorem~\ref{mainthm} in \S\ref{recforjones} of this paper we put the
colored Jones polynomials of  $y$  in the context of theory developed by
Garoufalidis and Le in \cite{garle}. 
The question, of course, is what should  the analogue of the $n$th colored
Jones polynomial of a knot $K$ in
an arbitrary manifold be? Based on the considerations explained in \cite{annrazvan},
we define these ``colored Jones polynomials'' to be the skeins  $S_n(K)$, $n\geq 0$, inside the skein module defined by the skein
relations derived in
\cite{kirbymelvin} for the Reshetikhin-Turaev theory \cite{reshetikhinturaev}
(see \S\ref{recforjones} below for the definition).
If we apply mutatis mutandis the method of \cite{garle},  then we find
the  recurrence polynomial for the colored Jones polynomials of
the curve $y$ in the complement of the $(2p+1,2)$ torus knot to be
\begin{eqnarray*}
  [L^2-t^4(x^2-2)L+t^8]L^{2p+1}+t^{4p+8}[L^2-(x^2-2)L+1]M^2.
\end{eqnarray*}
This simple polynomial contains all the necessary information for
computing $S_n(y)$ as a linear combination with coefficients
in ${\mathbb C}[t,t^{-1},x]$ of $S_0(y)$, $S_1(y)$, $\ldots$, $S_p(y)$ for $n>p$. Does there
exist a geometric interpretation of this polynomial analogous to
the one found in  \cite{frgelo}, \cite{garle}  for the recurrence polynomials of colored Jones polynomials
of knots in the 3-dimensional sphere?

\section{Some sequences of skeins in the complement of the genus 2 handlebody}

To enhance our ability to prove Theorem~\ref{mainthm}, we have to take a detour
through the skein theory of the genus two handlebody. 
Specifically, we  discuss three sequences of skeins in the
Kauffman bracket skein module of  the genus two handlebody, the third of
which has appeared in our previous work \cite{gelcanagasato}.
Our discussion requires some basic knowledge about linear recursive sequences;
a good reference for the necessary techniques is \cite{andrica}.

We interpret the genus two handlebody as the cylinder over the twice
punctured disk and we represent it schematically  sideways,
by drawing only the two curves that
trace the punctures in the cylinder. Przytycki \cite{przytycki} has
shown that the Kauffman bracket skein module of the genus two handelbody
is free with basis $x^my^nz^k$, $m,n,k\geq 0$,
where $x$ and $z$ are curves that are parallel to the boundaries of
the two open disks that  have been removed, and $y$ is a curve parallel
to the boundary of the original disk. The curves  $x,y,z$  are  shown
in Figure~\ref{xyz}.

\begin{figure}[h]
  \centering
\scalebox{.5}{\input{xyz.pstex_t}}

\caption{}
\label{xyz}
\end{figure}

The first two sequences of skeins that we have in mind have been
introduced in \cite{rghw}. They are 
$X_1*y^n$ and $Y_1*y^n$ depicted in Figure~\ref{XYyn}.
\begin{figure}[h]
  \centering
\scalebox{.5}{\input{XYyn.pstex_t}}

\caption{}
\label{XYyn}
\end{figure}
Modifying appropriately  the argument of  Lemma~2.1 in \cite{rghw}
we obtain

\begin{lemma}\label{recforx1y1}
  The sequences $X_1*y^n$, $Y_1*y^n$ satisfy the recursive relations
  \begin{eqnarray*}
&& X_1*y^{n+1}=t^4yX_1*y^n+(t^{-2}-t^6)Y_1*y^n+(1-t^4)(x^2+z^2)y^n\\
    &&Y_1*y^{n+1}=t^{-4}yY_1*y^n+(t^2-t^{-6})X_1*y^n+2(1-t^{-4})xzy^n,\\
    && X_1*y^0=-t^4y-t^2xz, \quad Y_1*y^0=-t^2-t^{-2}.
    \end{eqnarray*}
\end{lemma}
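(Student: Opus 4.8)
The plan is to imitate the proof of Lemma~2.1 in \cite{rghw}, where one computes the product $y\cdot(X_1*y^n)$ by resolving the crossings that appear when the extra copy of $y$ is slid across the skein $X_1*y^n$. Concretely, I would draw $y$ as a curve encircling both punctures and push it into the handlebody so that it intersects the ``tangle'' part of $X_1*y^n$; applying the Kauffman bracket skein relation $\lcr=t\zer+t^{-1}\ift$ at each of these crossings (there will be a small fixed number of them, independent of $n$), together with the framing relation $\bigcirc=-t^2-t^{-2}$, resolves the product into a ${\mathbb C}[t,t^{-1}]$-linear combination of diagrams. The point of the construction of $X_1*y^n$ and $Y_1*y^n$ is that, after isotopy, every resolved diagram is recognizable as one of $X_1*y^n$, $Y_1*y^n$, $x^2y^n$, $z^2y^n$ or $xzy^n$; collecting the coefficients gives the first two stated recursions. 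The two initial conditions $X_1*y^0$ and $Y_1*y^0$ come from directly resolving the small tangles $X_1$ and $Y_1$ (with no $y$-strand present) in the handlebody, again a finite crossing computation.

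First I would fix precise pictures for $X_1$ and $Y_1$ matching Figure~\ref{XYyn} and for the product curve $y$, being careful about the blackboard framing so that the framing corrections (powers of $-t^{\pm 3}$ from each curl) are accounted for; this is where the asymmetry between the ``$t^4$'' in the $X_1$-recursion and the ``$t^{-4}$'' in the $Y_1$-recursion originates, reflecting that $X_1$ and $Y_1$ differ by a local crossing change. Next I would carry out the crossing resolutions for $y\cdot(X_1*y^n)$, organizing the $2^{(\#\text{crossings})}$ terms by which strands get reconnected: one term reproduces a diagram isotopic to $X_1*y^{n+1}$ up to framing (yielding the $t^4 y\,X_1*y^n$ term — note the extra factor $y$ appears because the new $y$-curve, once its crossings with the tangle are all smoothed the ``parallel'' way, simply becomes an additional parallel copy), some terms close off a strand into a copy of $x$ or $z$ squared or into the product $xz$, and the remaining terms rearrange the tangle into the shape of $Y_1$, producing the $(t^{-2}-t^6)Y_1*y^n$ contribution. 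Then I would repeat the analogous computation for $y\cdot(Y_1*y^n)$. A useful bookkeeping check at the end is to specialize $t\to$ a value where the skein module collapses, or to compare with the $n=0$ case against the independently computed initial conditions.

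The main obstacle I expect is purely diagrammatic bookkeeping: getting the crossing signs and the framing powers exactly right so that the coefficients come out as the stated $t^4$, $t^{-2}-t^6$, $(1-t^4)(x^2+z^2)$ and so on, rather than some sign- or power-shifted variant. There is no conceptual difficulty — the recursion is forced by the skein relations — but it is easy to misidentify which smoothing yields $X_1*y^{n+1}$ versus a lower-order term, and to lose track of whether a closed loop contributes $-t^2-t^{-2}$ or gets absorbed into $x$, $y$ or $z$. I would mitigate this by handling $X_1*y^n$ and $Y_1*y^n$ in parallel and exploiting the symmetry of the twice-punctured disk under swapping the two punctures (which exchanges the roles of $x$ and $z$ and must leave the system invariant), and by verifying that setting $n$ small reproduces facts already known from \cite{rghw} or \cite{rghw}'s Lemma~2.1 in the one-puncture case.
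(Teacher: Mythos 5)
Your proposal matches the paper's own treatment: the paper gives no detailed argument for this lemma, stating only that it follows by ``modifying appropriately the argument of Lemma~2.1 in \cite{rghw}'', which is precisely the diagrammatic computation you describe — resolving the crossings of the added $y$-strand with the clasp via the Kauffman bracket relation, recognizing the smoothed diagrams as $X_1*y^n$, $Y_1*y^n$, $x^2y^n$, $z^2y^n$, $xzy^n$, and collecting coefficients. The bookkeeping concerns you raise (framing powers, which smoothing yields which term, the $x\leftrightarrow z$ symmetry check) are exactly the content of that adaptation, so your approach is essentially the same as the paper's.
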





We are interested in finding explicit formulas for $X_1*y^n$ and $Y_1*y^n$,
but as experience has taught us, it is better to replace the ``powers'' of
$y$ by Chebyshev polynomials in $y$. So  instead we will find explicit formulas
for $X_1*T_n(y)$ and $Y_1*T_n(y)$, where $T_n$ is the Chebyshev polynomial
of first kind defined in the introduction.

\begin{theorem}\label{formforXY}
  The following formulas hold
  \begin{eqnarray*}
    &&    X_1*T_n(y)= -t^{4n+4}S_{n+1}(y)-t^{-4n}S_{n-1}(y)+t^{4n}S_{n-1}(y)\\&&\quad
    +t^{-4n+4}S_{n-3}(y) -t^{4n+2}xzS_n(y)+t^{-4n+2}xzS_{n-2}(y)
    \\&&\quad +(1-t^{4n})\sum_{k=0}^{n-1}t^{-4k}(x^2+z^2)S_{n-2k-1}(y)\\&&\quad +2(1-t^{4n})\sum_{k=0}^{n-1}t^{-4k-2}xzS_{n-2k-2}(y),\quad n\geq 1,\\
    &&Y_1*T_n(y)=-(t^{4n+2}+t^{-4n-2})S_n(y)+(t^{-4n}-t^{4n})xzS_{n-1}(y)\\
    &&\quad +(t^{4n-2}+t^{-4n+2})S_{n-2}(y)+   (1-t^{4n})(x^2+z^2)\sum_{k=0}^{n-1}t^{-4k-2}S_{n-2k-2}(y)\\
    &&\quad +2(1-t^{4n})xz\sum_{k=0}^{n-1}t^{-4k-4}S_{n-2k-3}(y),\quad n\geq 1.
    \end{eqnarray*}
\end{theorem}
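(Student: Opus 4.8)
The plan is to reduce everything to the recursion of Lemma~\ref{recforx1y1} and then pass from powers of $y$ to Chebyshev polynomials of the first kind. By definition $X_1*q(y)=\sum_k c_k\,(X_1*y^k)$ when $q(y)=\sum_k c_ky^k$, so summing the relations of Lemma~\ref{recforx1y1} against the coefficients of an arbitrary polynomial $q$ and using that multiplication by the curve $y$ is linear, we get
\begin{eqnarray*}
&& X_1*(yq(y))=t^4y\,(X_1*q(y))+(t^{-2}-t^6)\,(Y_1*q(y))+(1-t^4)(x^2+z^2)q(y),\\
&& Y_1*(yq(y))=t^{-4}y\,(Y_1*q(y))+(t^2-t^{-6})\,(X_1*q(y))+2(1-t^{-4})xz\,q(y),
\end{eqnarray*}
where $y\cdot$ denotes multiplication by $y$ in the skein module of the genus two handlebody, which in the basis $x^az^bS_j(y)$ acts by $S_j(y)\mapsto S_{j+1}(y)+S_{j-1}(y)$ (with the usual conventions $S_{-1}=0$, $S_{-m}=-S_{m-2}$). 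Taking $q(y)=T_n(y)$ and using $yT_n(y)=T_{n+1}(y)+T_{n-1}(y)$, which holds for every $n$ once one sets $T_{-n}=T_n$, we obtain for $U_n:=X_1*T_n(y)$ and $V_n:=Y_1*T_n(y)$ the coupled linear recursion
\begin{eqnarray*}
&& U_{n+1}=t^4y\,U_n-U_{n-1}+(t^{-2}-t^6)V_n+(1-t^4)(x^2+z^2)T_n(y),\\
&& V_{n+1}=t^{-4}y\,V_n-V_{n-1}+(t^2-t^{-6})U_n+2(1-t^{-4})xz\,T_n(y).
\end{eqnarray*}

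Next I would nail down the initial data. From $X_1*y^0=-t^4y-t^2xz$ and $Y_1*y^0=-t^2-t^{-2}$ one gets $U_0=2(X_1*y^0)=-2t^4S_1(y)-2t^2xzS_0(y)$ and $V_0=2(Y_1*y^0)=-2(t^2+t^{-2})S_0(y)$, and $U_1=X_1*y$, $V_1=Y_1*y$ are read off from Lemma~\ref{recforx1y1} after rewriting $y^2=S_2(y)+S_0(y)$. A short check shows that the formulas claimed in the theorem, with empty sums equal to $0$ and with $S_{-1}=0$, $S_{-2}=-1$, $S_{-3}=-y$, already reproduce these values at $n=0$ and $n=1$. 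With the base cases in hand the proof is an induction on $n$: assuming the formulas for indices $n-1$ and $n$, substitute them into the two displayed recursions and check that the formula for index $n+1$ comes out. After rewriting $T_m=S_m-S_{m-2}$ for the source terms and applying $yS_j(y)=S_{j+1}(y)+S_{j-1}(y)$ to the $t^4yU_n$ and $t^{-4}yV_n$ terms, the whole identity becomes a statement about the $t$-power coefficients of the various families $S_j(y)$, $xzS_j(y)$ and $(x^2+z^2)S_j(y)$.

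The heart of the argument, and the only real obstacle, is the bookkeeping for the geometric sums. One needs the reindexing identity that the length-$n$ sum $\sum_{k=0}^{n-1}t^{-4k-\bullet}S_{n-2k-\bullet}(y)$ equals its $k=0$ term plus $t^{-4}$ times the length-$(n-1)$ sum with shifted indices, together with the algebraic identity $1-t^{4n+4}=(1-t^{4n})+t^{4n}(1-t^4)$ and its $t^{-4}$ analogue, which is exactly what repackages ``$(1-t^{4n})\,\mathrm{sum}_n$ plus a leftover'' as ``$(1-t^{4n+4})\,\mathrm{sum}_{n+1}$'' in the inductive step. After that, the finitely many ``head'' contributions — those in $S_{n+1}(y),\dots,S_{n-3}(y)$ and $xzS_n(y),xzS_{n-2}(y)$, together with the first and last terms peeled off from the sums — must be collected and their $t$-exponents matched against the claimed formula, and one must verify that the conventions for negative-index $S_j$ do not spoil the pattern for the smallest values of $n$. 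I expect this matching to be routine but lengthy, and would organize it by treating the coefficient of each family $S_j(y)$, $xzS_j(y)$, $x^2S_j(y)+z^2S_j(y)$ separately.

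As a consistency check on the shape of the answer: the homogeneous part of the recursion has characteristic equation $\mu^2-(t^4+t^{-4})y\mu+(y^2-2)+(t^8+t^{-8})=0$ in $\mu=\alpha+\alpha^{-1}$, with solutions $\mu=\tfrac12\big[(t^4+t^{-4})y\pm(t^4-t^{-4})\sqrt{y^2-4}\,\big]$ corresponding to the four roots $\alpha\in\{t^{\pm4}w,\,t^{\pm4}w^{-1}\}$ where $y=w+w^{-1}$; this accounts for the $t^{4n}(\text{polynomial in }y)+t^{-4n}(\text{polynomial in }y)$ shape of the homogeneous part and, through a particular solution, for the geometric series in $t^{\pm4}$. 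So one could equally well \emph{derive} the formulas, rather than merely verify them, by solving the recursion directly along the lines of \cite{andrica}.
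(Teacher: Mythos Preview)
Your approach is correct and would lead to a valid proof, but it differs from the paper's in an interesting way. You first pass from $y^n$ to $T_n(y)$, obtaining the second-order coupled recursion for $U_n=X_1*T_n(y)$ and $V_n=Y_1*T_n(y)$, and then propose to \emph{verify} the claimed closed forms by induction on $n$. The paper instead works directly with the first-order system $(X_1*y^n,Y_1*y^n)$ from Lemma~\ref{recforx1y1}, diagonalizes the coefficient matrix $A$ explicitly over the auxiliary variable $w$ with $y=w+w^{-1}$ (its eigenvalues are $t^4w^{\pm1}+t^{-4}w^{\mp1}$, exactly the $\mu$-roots you found in your final paragraph), solves the homogeneous and inhomogeneous parts separately, and only then replaces $(t^4w+t^{-4}w^{-1})^n$ by $t^{4n}w^n+t^{-4n}w^{-n}$ to pass to $T_n$. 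So the paper \emph{derives} the formula from scratch, whereas your main line of argument presupposes it and checks it; your closing remark about the characteristic equation is essentially the paper's proof in embryo.

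What each approach buys: the paper's matrix diagonalization explains \emph{why} the answer has the shape it does and would let one find the formula without knowing it in advance, at the cost of some linear algebra in the extension ring ${\mathbb C}[t^{\pm1}][w^{\pm1}]$. Your inductive verification is more elementary and stays in the $S_j(y)$ basis throughout, but requires the answer as input and, as you acknowledge, the bookkeeping for the geometric sums (the reindexing identities and the splitting $1-t^{4(n+1)}=(1-t^{4n})+t^{4n}(1-t^4)$) is where all the labour sits. Both routes are legitimate; if you follow yours, the induction step really is routine once organised by the families $S_j(y)$, $xzS_j(y)$, $(x^2+z^2)S_j(y)$ as you suggest, and your check that the formula extends consistently to $n=0$ is exactly what is needed to launch the induction.
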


\begin{proof} The argument is based on the recursive relation for the vector
  $$(X_1*y^n,Y_1*y^n)$$ exhibited in
  Lemma~\ref{recforx1y1}.
We introduce the auxiliary variable $w$ so that $y=w+w^{-1}$
($w$ has no geometric meaning, it is used solely for computations).
The coefficient matrix of the recursive relation,
\begin{eqnarray*}
  A=\left(
  \begin{array}{cc}
    t^4y&t^{-2}-t^6\\
    t^2-t^{-6}&t^{-4}y
  \end{array}
  \right),
  \end{eqnarray*}
has eigenvalues
\begin{eqnarray*}
\lambda_{1,2}=\frac{1}{2}[(t^4+t^{-4})y\pm (t^4-t^{-4})\sqrt{y^2-4}]=t^4w^{\pm 1}+t^{-4}w^{\mp 1},
  \end{eqnarray*}
with eigenvectors $(t^2w,1)$ and $(w^{-1},t^{-2})$, respectively.
Hence this coefficient matrix is diagonalized as
\begin{eqnarray*}
  \frac{1}{w-w^{-1}} \left(
    \begin{array}{cc}
      t^2w& w^{-1}\\
      1 &t^{-2}
    \end{array}
    \right)\left(
    \begin{array}{cc}
      t^4w+t^{-4}w^{-1} &0\\
      0 &t^4w^{-1}+t^{-4}w
    \end{array}
    \right)\left(
    \begin{array}{cc}
      t^{-2} &-w^{-1}\\
      -1&t^2w
    \end{array}
    \right)
  \end{eqnarray*}
Now we split the sequence $(X_1*y^n, Y_1*y^n)$ into $(a_n,b_n)+(c_n,d_n)$
where $(a_n,b_n)$ satisfies the homogeneous recursive relation
\begin{eqnarray*}
  \left(
  \begin{array}{c}
    a_{n+1}\\
    b_{n+1}
  \end{array}
  \right)=A \left(
  \begin{array}{c}
    a_n\\
    b_n
  \end{array}
  \right),\quad a_0=X_1*y^0,\quad b_0=Y_1*y^0,
\end{eqnarray*}
and $(c_n,d_n)$ satisfies the nonhomogenous recursive relation with trivial initial condition
\begin{eqnarray*}
  \left(
  \begin{array}{c}
    c_{n+1}\\
    d_{n+1}
  \end{array}
  \right)=A \left(
  \begin{array}{c}
    c_n\\
    d_n
  \end{array}
  \right)+
  \left(\begin{array}{c}
    (1-t^4)(x^2+z^2)y^n\\
    2(1-t^{-4})xzy^n
  \end{array}\right)
    ,\quad c_0=d_0=0.
\end{eqnarray*}
We obtain
\begin{eqnarray*}
    \left(\begin{array}{c}a_n\\b_n\end{array}\right)=
   \frac{1}{w-w^{-1}} \left(
    \begin{array}{cc}
      t^2w& w^{-1}\\
      1 &t^{-2}
    \end{array}
    \right)\left(
    \begin{array}{cc}
      (t^4w+t^{-4}w^{-1})^n &0\\
      0 &(t^4w^{-1}+t^{-4}w)^n
    \end{array}
    \right)\\ \times \left(
    \begin{array}{cc}
      t^{-2} &-w^{-1}\\
      -1&t^2w
    \end{array}
    \right)\left(
    \begin{array}{c}
      -t^4(w+w^{-1})-t^2xz\\
      -t^2-t^{-2}
    \end{array}
    \right).
  \end{eqnarray*}
  Using the equalities $$T_n(tw+t^{-1}w^{-1})=t^nw^n+t^{-n}w^{-n},T_n(t^{-1}w+tw^{-1})=t^{-n}w^n+t^{n}w^{-n}$$ we deduce that  the ``homogeneous'' part of $(X_1*T_n(y),Y_1*T_n(y))$ is
   \begin{eqnarray*}
  \frac{1}{w-w^{-1}} \left(
    \begin{array}{cc}
      t^2w& w^{-1}\\
      1 &t^{-2}
    \end{array}
    \right)\left(
    \begin{array}{cc}
      t^{4n}w^n+t^{-4n}w^{-n} &0\\
      0 &t^{4n}w^{-n}+t^{-4n}w^n
    \end{array}
    \right)\\
    \times\left(
    \begin{array}{cc}
      t^{-2} &-w^{-1}\\
      -1&t^2w
    \end{array}
    \right)\left(
    \begin{array}{c}
      -t^4(w+w^{-1})-t^2xz\\
      -t^2-t^{-2}
    \end{array}
    \right)   
   \end{eqnarray*}
   Using the fact that $$S_n(y)=\frac{w^{n+1}-w^{-n-1}}{w-w^{-1}}$$ we obtain
   that this is further equal to
   \begin{eqnarray*}
     \left(
     \begin{array}{cc}
       t^{4n}S_n(y)-t^{-4n}S_{n-2}(y)& -t^2(t^{4n}-t^{-4n})S_{n-1}(y)\\
         -t^{-2}(t^{-4n}-t^{4n})S_{n-1}(y)&t^{-4n}S_n(y)-t^{4n}S_{n-2}(y)
       \end{array}
       \right)\left(
    \begin{array}{c}
      -t^4y-t^2xz\\
      -t^2-t^{-2}
    \end{array}
    \right)   
   \end{eqnarray*}
   So the ``homogeneous'' part of $X_1*T_n(y)$ is
      \begin{eqnarray*}
        -t^{4n+4}S_{n+1}(y)-t^{-4n}S_{n-1}(y)+t^{4n}S_{n-1}(y)+t^{-4n+4}S_{n-3}(y)\\
        -t^{4n+2}xzS_n(y)+t^{-4n+2}xzS_{n-2}(y),
      \end{eqnarray*}
      while the ``homogeneous'' part of $Y_1*T_n(y)$ is 
      \begin{eqnarray*}
-(t^{4n+2}+t^{-4n-2})S_n(y)+(t^{-4n}-t^{4n})xzS_{n-1}(y)\\+(t^{4n-2}+t^{-4n+2})S_{n-2}(y).
      \end{eqnarray*}

      On the other hand
      \begin{eqnarray*}
        &&\left(
        \begin{array}{c}
          c_n\\
          d_n
        \end{array}
        \right)=\frac{(t^2-t^{-2})}{w-w^{-1}} \left(
    \begin{array}{cc}
      t^2w& w^{-1}\\
      1 &t^{-2}
    \end{array}
    \right)\\
    &&\times \left(
    \begin{array}{cc}
      \frac{\displaystyle{t^{4n}w^n+t^{-4n}w^{-n}-w^n-w^{-n}}}{\displaystyle{t^4w+t^{-4}w^{-1}-w-w^{-1}}} &0\\
      0 &\frac{\displaystyle{t^{4n}w^{-n}+t^{-4n}w^n-w^n-w^{-n}}}{\displaystyle{t^4w^{-1}+t^{-4}w-w-w^{-1}}}
    \end{array}
    \right)\\
    &&\times\left(
    \begin{array}{cc}
      t^{-2} &-w^{-1}\\
      -1&t^2w
    \end{array}
    \right)\left(
    \begin{array}{c}
     - t^2(x^2+z^2)\\
      2t^{-2}xz
    \end{array}
    \right).
               \end{eqnarray*}
      Rewrite this expression as
      \begin{eqnarray*}
&&\frac{t^{-2}(t^{4n}-1)}{w-w^{-1}} \left(
    \begin{array}{cc}
      t^2w& w^{-1}\\
      1 &t^{-2}
    \end{array}
    \right)\left(
    \begin{array}{cc}
      \frac{\displaystyle{w^n-t^{-4n}w^{-n}}}{\displaystyle{w-t^{-4}w^{-1}}}&0\\
      0&\frac{\displaystyle{w^{-n}-t^{-4n}w^{n}}}{\displaystyle{w^{-1}-t^{-4}w}}
    \end{array}
    \right)\\
  &&  \times\left(
    \begin{array}{cc}
      t^{-2} &-w^{-1}\\
      -1&t^2w
    \end{array}
    \right)\left(
    \begin{array}{c}
     - t^2(x^2+z^2)\\
      2t^{-2}xz
    \end{array}
    \right)\\
    &&=\frac{t^{-2}(t^{4n}-1)}{w-w^{-1}}
   \left( \begin{array}{cc}
      A&B\\C&D
    \end{array}
    \right)
    \left(
    \begin{array}{c}
     - t^2(x^2+z^2)\\
      2t^{-2}xz
    \end{array}
    \right),
        \end{eqnarray*}
      where
      \begin{eqnarray*}
&& A=\frac{w^{n+1}-t^{-4n}w^{-n+1}}{w-t^{-4}w^{-1}}-\frac{w^{-n-1}-t^{-4n}w^{n-1}}{w^{-1}-t^{-4}w}\\
        &&   B=-t^2\left(\frac{w^n-t^{-4n}w^{-n}}{w-t^{-4}w^{-1}}-\frac{w^{-n}-t^{-4n}w^n}{w^{-1}-t^{-4}w}\right),\\
        &&C=t^{-2}\left(\frac{w^n-t^{-4n}w^{-n}}{w-t^{-4}w^{-1}}-\frac{w^{-n}-t^{-4n}w^n}{w^{-1}-t^{-4}w}\right),\\
        &&D=-\frac{w^{n-1}-t^{-4n}w^{-n-1}}{w-t^{-4}w^{-1}}+\frac{w^{-n+1}-t^{-4n}w^{n+1}}{w^{-1}-t^{-4}w}.
        \end{eqnarray*}
      And we have
      \begin{eqnarray*}
        &&\frac{A}{w-w^{-1}}=\sum_{k=0}^{n-1}t^{-4k}S_{n-2k-1}(y),\quad \frac{B}{w-w^{-1}}=-\sum_{k=0}^{n-1}t^{-4k+2}S_{n-2k-2}(y),\\
        &&\frac{C}{w-w^{-1}}=\sum_{k=0}^{n-1}t^{-4k-2}S_{n-2k-2}(y),\quad \frac{D}{w-w^{-1}}=-\sum_{k=0}^{n-1}t^{-4k}S_{n-2k-3}(y).
      \end{eqnarray*}
      After a final multiplication we obtain that the nonhomogeneos parts of
      $X_1*T_n(y)$ and $Y_1*T_n(y)$ are, respectively,
      \begin{eqnarray*}
        (1-t^{4n})\sum_{k=0}^{n-1}t^{-4k}(x^2+z^2)S_{n-2k-1}(y)+2(1-t^{4n})\sum_{k=0}^{n-1}t^{-4k-2}xzS_{n-2k-2}(y),\\
        (1-t^{4n})(x^2+y^2)\sum_{k=0}^{n-1}t^{-4k-2}S_{n-2k-2}(y)+2(1-t^{4n})xz\sum_{k=0}^{n-1}t^{-4k-4}S_{n-2k-3}(y).
        \end{eqnarray*}
      The conclusion follows.
\end{proof}

As hinted in the introduction, in the work of Bakshi and  Przytycki has appeared  the question of expressing the skein from Figure~\ref{sigman} in terms of the basis $x^my^nz^k$ of the Kauffman bracket skein module of the genus two handlebody. Again, it is advantageous to work with curves decorated by
Chebyshev polynomials instead of ``powers'', we will therefore compute instead
the skein in which $y^n$ is replaced by $T_n(y)$, and let us call this skein
$\sigma_n$. And we use the basis $S_m(x)S_n(y)S_k(z)$, $m,n,k\geq 0$.

\begin{figure}[h]
  \centering
\scalebox{.6}{\input{sigman.pstex_t}}

\caption{}
\label{sigman}
\end{figure}

\begin{proposition}
  We have
  \begin{eqnarray*}
&&\sigma_n=-t^{4n+5}S_{n+1}(y)-t^{-4n+1}S_{n-1}(y)+t^{4n+1}S_{n-1}(y)+t^{-4n+5}S_{n-3}(y)\\
    &&  \quad -t^{4n+3}S_1(x)S_n(y)S_1(z)+t^{-4n+3}S_1(x)S_{n-2}(y)S_1(z)\\
    &&\quad+t^{-1}S_1(x)S_n(y)S_1(z)+t^{-1}S_1(x)S_{n-2}(y)S_1(z)
    \\&&\quad+t(1-t^{4n})\sum_{k=0}^{n-1}t^{-4k}(S_2(x)+S_2(z)+2)S_{n-2k-1}(y) \\
    &&\quad+2t^{-1}(1-t^{4n})\sum_{k=0}^{n-1}t^{-4k}S_1(x)S_{n-2k-2}(y)S_1(z).
    \end{eqnarray*}
  \end{proposition}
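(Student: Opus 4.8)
The plan is to deduce the formula from Theorem~\ref{formforXY} by resolving a single crossing. Comparing the diagram that defines $\sigma_n$ (Figure~\ref{sigman}) with the diagram of $X_1*y^n$ (Figure~\ref{XYyn}), the two agree except at one crossing. Applying there the Kauffman bracket relation $\lcr=t\,\zer+t^{-1}\,\ift$, the ``$t$''-smoothing returns exactly the diagram of $X_1*y^n$, while the ``$t^{-1}$''-smoothing returns, after an isotopy in the twice punctured disk, a split skein: the curves $x$ and $z$ together with a skein carried by the $y$-annulus. Carrying out the same two smoothings after the $n$-strand cable has been decorated by $T_n(y)$ (legitimate because the substitution $y^n\mapsto T_n(y)$ is ${\mathbb C}[t,t^{-1}]$-linear), and identifying the annular factor coming from the second smoothing as $S_n(y)+S_{n-2}(y)$, I expect to obtain
\begin{equation*}
\sigma_n\;=\;t\,\bigl(X_1*T_n(y)\bigr)\;+\;t^{-1}\,S_1(x)\,\bigl(S_n(y)+S_{n-2}(y)\bigr)\,S_1(z).
\end{equation*}
Since $S_n(y)+S_{n-2}(y)=y\,S_{n-1}(y)$ by the defining recursion of the second-kind Chebyshev polynomials, the correction term is just $x$ and $z$ together with a single strand of $y$ running parallel to the core of the $y$-annulus colored by $n-1$.

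Granting this identity, the remainder is bookkeeping with Chebyshev polynomials: substitute the formula for $X_1*T_n(y)$ from Theorem~\ref{formforXY} into the first term, multiply it by $t$, and pass the $x,z$-monomials appearing there into the basis $S_m(x)S_j(y)S_k(z)$ using $S_1(\xi)=\xi$ and $S_2(\xi)=\xi^2-1$, so that $xz=S_1(x)S_1(z)$ and $x^2+z^2=S_2(x)+S_2(z)+2$. Then $t$ times the pure-$y$ terms of $X_1*T_n(y)$ yields the first line of the asserted formula; $t$ times its $xz\,S_n(y)$ and $xz\,S_{n-2}(y)$ terms, added to the correction term $t^{-1}S_1(x)\bigl(S_n(y)+S_{n-2}(y)\bigr)S_1(z)$, yields the terms of the form $S_1(x)S_j(y)S_1(z)$ in the next two lines; and $t$ times its two sums yields the last two sums, using $t(1-t^{4n})=t-t^{4n+1}$ and $2t\,t^{-4k-2}(1-t^{4n})=2t^{-1}t^{-4k}(1-t^{4n})$. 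Collecting the contributions proves the Proposition.

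The one genuinely substantive step, and the one most likely to need care, is the diagrammatic reduction in the first paragraph: singling out the correct crossing of $\sigma_n$ in Figure~\ref{sigman}, and checking that its two smoothings give respectively the diagram of $X_1*y^n$ and the split skein formed by $x$, $z$ and the element $S_n(y)+S_{n-2}(y)$ of the $y$-annulus --- in particular that $X_1*T_n(y)$ enters with coefficient exactly $t$, with no stray $t^{\pm 4}$ framing factor, and that the annular factor of the second smoothing is $S_n(y)+S_{n-2}(y)$ rather than, say, $y\,T_n(y)$. If the diagram of $\sigma_n$ should in fact differ from that of $X_1*y^n$ at more than one crossing, the same strategy still applies once the additional auxiliary skeins are reduced; an alternative route would be to derive directly a linear recursion for $\sigma_n$ of the type in Lemma~\ref{recforx1y1} and solve it by the diagonalization used in the proof of Theorem~\ref{formforXY}, the correction term then arising from a shifted initial condition.
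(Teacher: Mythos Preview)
Your approach is exactly the paper's: the proof there is the single line
\[
\sigma_n=t\,X_1*T_n(y)+t^{-1}xz\,T_n(y)=t\,X_1*T_n(y)+t^{-1}xz\bigl(S_n(y)+S_{n-2}(y)\bigr),
\]
followed by ``and then applying Theorem~\ref{formforXY}'', which is precisely the crossing resolution and bookkeeping you describe. Your hedging about the annular factor is well placed---the second smoothing leaves the $T_n(y)$-decorated $y$-cable intact together with split $x$ and $z$, so the factor is $T_n(y)$ itself; note that the identity $T_n=S_n+S_{n-2}$ used here (and in the stated Proposition) should read $T_n=S_n-S_{n-2}$, a sign slip already present in the paper.
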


\begin{proof}
  This follows from the fact that $$\sigma_n=tX_1*T_n(y)+t^{-1}xzT_n(y)=tX_1*T_n(y)+t^{-1}xz(S_n(y)+S_{n-2}(y))$$ and then  applying
  Theorem~\ref{formforXY}. 
  \end{proof}

Finally, let us recall the skeins $X_i$ from the Kauffman
bracket skein module of the genus two handlebody, which were defined 
in \cite{gelcanagasato} and are depicted in Figure~\ref{xicurves}.

\begin{figure}[h]
  \centering
\scalebox{.3}{\input{xicurves.pstex_t}}

\caption{}
\label{xicurves}
\end{figure}

Adapting the proof of Lemma~1 from \cite{gelcanagasato} to the case where $x\neq z$, we see that the $X_i$  satisfy the recursive relation
\begin{eqnarray*}
X_{i+2}=t^2yX_{i+1}-t^4X_i-2t^2xz,\quad X_0=-t^2-t^{-2}, X_1=-t^4y-t^2xz.
  \end{eqnarray*}
and  consequently
 \begin{eqnarray*}
   X_{i}=-t^{-2i-2}S_{i}(y)-t^{-2i}xzS_{i-1}(y)+t^{-2i+2}S_{i-2}(y)\\
   -2t^{-2i+2}xz\sum_{k=0}^{i-2}t^{2k}S_{i-k-2}(y).
       \end{eqnarray*}
 This formula has been proved in \cite{gelcanagasato}; it  can be easily
 verified by induction,
 and it can be determined using  the fact that $t^{-2i}S_{i}(y)$
 and $t^{-2i-2}S_{i-1}$ form a basis for the space of solutions to
 the homogeneous recursion  $X_{i+2}=t^2yX_{i+1}-t^4X_i$.

\section{Proof of the main result}

In this section we prove  Theorem~\ref{mainthm}. Because $S_{-n}(y)=-S_{n-2}(y)$, we
only need to check the identity
  \begin{eqnarray*}
t^{-2n-1}S_{p+n}(y)+t^{2n+1}S_{p-n-1}(y)=(-1)^nS_{2n}(x)(tS_{p-1}(y)+t^{-1}S_p(y)) 
  \end{eqnarray*}
  for $n\geq 0$. This identity has been  checked already in \cite{gelcasain} for $n=0,1,2,\ldots, p+1$, but,
  as mentioned before,  that proof cannot be extended for larger $n$. 

We will use instead  the equality derived in Figure~\ref{handleslide}.
\begin{figure}[h]
  \centering
\scalebox{.3}{\input{handleslide.pstex_t}}

\caption{}
\label{handleslide}
\end{figure}
This equality translates to
\begin{eqnarray*}
\overline{X_1*y^n}=y^n\overline{X_{2p}}, 
\end{eqnarray*}
where $X_1*T_n(y)$ and $X_{2p}$ are the skeins discussed in the previous
section (viewed as lying inside the handlebody marked in the figure with dots),
$\overline{\Sigma}$ denotes the mirror image of the skein $\Sigma$
(with respect to a natural reflection of the handlebody onto itself) and on
the right-side we have the  product of $y^n$ and $\overline{X_{2p}}$ defined
by the cylinder structure of the genus two handlebody. Note that when passing
to the mirror image $t$ should be replaced by $t^{-1}$.

As a consequence of this identity we obtain
\begin{eqnarray}\label{handleslideform}
\overline{X_1*T_n(y)}=T_n(y)\overline{X_{2p}}, 
\end{eqnarray}
which is the key ingredient in the proof of Theorem~\ref{mainthm}.

Using  the (trigonometric) identity
 $S_k(x)T_n(x)=S_{k+n}(x)+S_{k-n}(x)$ and Theorem~\ref{formforXY}
 we can write (\ref{handleslideform}) explicitly as
\begin{eqnarray*}
			&&-t^{-4n-4}S_{n+1}(y)-t^{4n}S_{n-1}(y)+t^{-4n}S_{n-1}(y)+t^{4n-4}S_{n-3}(y)\\&&+   t^{-4n-2}x^2S_n(y)-2t^{-2}x^2S_n(y)-t^{4n-2}x^2S_{n-2}(y)+2t^{-2}x^2S_{n-2}(y)\\&&+2t^{-2}(1-t^{-4n})x^2\sum_{k=0}^{2n-1}t^{2k}S_{n-k}(y)+t^{-4p-2}S_{2p+n}(y)+t^{-4p-2}S_{2p-n}(y)\\&&+t^{-4p}x^2S_{2p-1+n}(y)
  +t^{-4p}x^2S_{2p-1-n}(y)-t^{-4p+2}S_{2p-2+n}(y)\\&&-t^{-4p+2}S_{2p-2-n}(y)+2t^{-4p+2}x^2\sum_{k=0}^{2p-2}t^{2k}[S_{2p-k-2+n}(y)+S_{2p-k-2-n}(y)]\\
  &&=0.
\end{eqnarray*}
Both this recursive relation
and the one from the main theorem (Theorem~\ref{mainthm}) completely determine the
values of $S_{n}(y)$ for $n>p$ from $S_0(y)$, $S_1(y)$, $\ldots$, $S_p(y)$.
To complete the proof of Theorem\ref{mainthm} it suffices to show that
the sequence $S_n(y)$ defined by the recursive relation from the statement
of this theorem also satisfies the above identity. So from this moment on
we assume that $S_n(y)$ is the sequence defined by the recursive relation
from the statement of Theorem~\ref{mainthm} and we prove that it satisfies this
identity.

For the proof, we further transform this desired identity into
			\begin{eqnarray*}
			&&t^{-2p+2n-1}(t^{-2p-2n-1}S_{2p+n}(y)-t^{2p+2n+1}S_{n-1}(y))\\&&+t^{-2p-2n-1}(t^{-2p+2n-1}S_{2p-n}(y)+t^{2p-2n+1}S_{n-1}(y))\\
			  &&+t^{-2p+2n-1}x^2(t^{-2p-2n+1}S_{2p-1+n}(y)-t^{2p+2n-1}S_{n-2}(y))\\&&+t^{-2p-2n-1}x^2(t^{-2p+2n+1}S_{2p-n-1}(y)+t^{2p-2n-1}S_n(y))\\&&-t^{-2p+2n-1}(t^{-2p-2n+3}S_{2p-2+n}(y)-t^{2p+2n-3}S_{n-3}(y))\\
                          &&-t^{-2p-2n-1}(t^{-2p+2n+3}S_{2p-2-n}(y)+t^{2p-2n-3}S_{n+1}(y))\\
		          &&	=2t^{-2}x^2\left[S_n(y)+S_{n-2}(y)+(t^{-4n}-1)\sum_{k=0}^{2n-1}t^{2k}S_{n-k}(y)\right.\\
                            &&\left.-\sum_{k=0}^{2p-2}t^{-4p+2k+4}(S_{2p-k-2+n}(y)+S_{2p-k-2-n}(y))\right],
			\end{eqnarray*}
			and then rewrite it as
			\begin{eqnarray*}
			  &&t^{-2p+2n-1}(t^{-2p-2n-1}S_{2p+n}(y)-t^{2p+2n+1}S_{n-1}(y))\\
                          &&+t^{-2p-2n-1}(t^{-2p+2n-1}S_{2p-n}(y)+t^{2p-2n+1}S_{n-1}(y))\\
			  &&+t^{-2p+2n-1}x^2(t^{-2p-2n+1}S_{2p-1+n}(y)
                          -t^{2p+2n-1}S_{n-2}(y))\end{eqnarray*}
                        \begin{eqnarray*}&&+t^{-2p-2n-1}x^2(t^{-2p+2n+1}S_{2p-n-1}(y)+t^{2p-2n-1}S_n(y))\\&&-t^{-2p+2n-1}(t^{-2p-2n+3}S_{2p-2+n}(y)
                          -t^{2p+2n-3}S_{n-3}(y))\\&&-t^{2p-2n-1}(t^{-2p+2n+3}S_{2p-2-n}(y)+t^{2p-2n-3}S_{n+1}(y))\\
		          &&	=2t^{-2}x^2\left[S_n(y)-S_{n-2}(y)+(t^{-4n}-1)\sum_{k=0}^{2n-1}t^{2k}S_{n-k}(y)\right.\\
                            &&\left.-\sum_{k=0}^{2p-2}t^{-2k}(S_{k+n}(y)+S_{k-n}(y))\right].
			\end{eqnarray*}
			Finally, we bring it into the form
			\begin{eqnarray*}
                          &&t^{-2p+2n-1}(t^{-2p-2n-1}S_{2p+n}(y)-t^{2p+2n+1}S_{n-1}(y))\\
                          &&+t^{-2p-2n-1}(t^{-2p+2n-1}S_{2p-n}(y)+t^{2p-2n+1}S_{n-1}(y))\\
&&+t^{-2p+2n-1}x^2(t^{-2p-2n+1}S_{2p-1+n}(y)-t^{2p+2n-1}S_{n-2}(y))\\
                          &&+t^{-2p-2n-1}x^2(t^{-2p+2n+1}S_{2p-n-1}(y)+t^{2p-2n-1}S_n(y))\\
                          &&-t^{-2p+2n-1}(t^{-2p-2n+3}S_{2p-2+n}(y)-t^{2p+2n-3}S_{n-3}(y))\\
 &&-t^{-2p-2n-1}(t^{-2p+2n+3}S_{2p-2-n}(y)+t^{2p-2n-3}S_{n+1}(y))\\
&&=2t^{-2}x^2\left[(t^{-4n}-1)\sum_{k=0}^{2n-1}t^{2k}S_{n-k}(y)-\sum_{k=1}^{2p-2}t^{-2k}(S_{k+n}(y)+S_{k-n}(y))\right].
		        \end{eqnarray*}
                        We will therefore prove that the sequence $S_n(y)$ defined by the
                        recursive relation from the statement of Theorem~\ref{mainthm} satisfies this identity.
                        Using the formula from the statement of the theorem  we deduce that
                        the left-hand side of the identity to be checked is equal to
\begin{eqnarray*}
  &&t^{-2p+2n-1}(-1)^{p+n}S_{2p+2n}(x)Y+t^{-2p-2n-1}(-1)^{p-n}S_{2p-2n}(x)Y\\
  &&+t^{-2p+2n-1}x^2(-1)^{p+n-1}S_{2p+2n-2}(x)Y\\
  &&+t^{-2p-2n-1}x^2(-1)^{p-n-1}S_{2p-2n-2}(x)Y\\
&&-t^{-2p+2n-1}(-1)^{p+n-2}S_{2p+2n-4}(x)Y-t^{-2p-2n-1}(-1)^{p-n-2}S_{2p-2n-4}(x)Y,
\end{eqnarray*}
where we use the short hand notation $Y=t^{-1}S_p(y)+tS_{p-1}(y)$.
  This expression is
 \begin{eqnarray*}
   (-1)^{p+n+1}t^{-1}\left[t^{-2p+2n}(S_{2p+2n-2}(x)+S_{2p+2n-4}(x))Y\right.\\
     \left.+t^{-2p-2n}(S_{2p-2n-2}(x)+S_{2p-2n-4}(x))Y\right].
 \end{eqnarray*}
 Now let us work on the right side of the identity. We have
 
  \begin{eqnarray*}
    &&\sum_{k=1}^{2p-2}t^{-2k}(S_{k+n}(y)+S_{k-n}(y))=\sum_{k=1}^{2p-2}t^{-2k}S_{k+n}(y)\\
 &&   +\sum_{k=1}^{2p-2}t^{-2(2p-1-k)}S_{2p-2-k-n+1}(y)
    =(1-t^{-4n})\sum_{k=1}^{2p-2}t^{-2k}S_{k+n}(y)\\
    &&+\sum_{k=1}^{2p-2}t^{-2p+1+2n}[t^{-2p+2k-2n+1}S_{2p-2-k-n+1}(y)
      +
                  t^{2p-2k+2n-1}S_{k+n}(y)].
                \end{eqnarray*}
  By using the formula from the statement of the theorem,
  this becomes
  \begin{eqnarray*}
\sum_{k=1}^{2p-2}(-1)^{p-n-k-1}t^{-2p+1-2n}S_{2(p-n-k-1)}(x)Y+(1-t^{-4n})\sum_{k=1}^{2p-2}t^{-2k}S_{k+n}(y).
    \end{eqnarray*}
  We are left  to checking that the sequence $S_n(y)$ satisfies the simpler identity
   \begin{eqnarray*}
&&[t^{4n}(S_{2p+2n-2}(x)+S_{2p+2n-4}(x))+S_{2p-2n-2}(x)+S_{2p-2n-4}(x)\\
 &&+\sum_{k=1}^{2p-2}(-1)^{k}x^2S_{2p-2n-2k-2}(x)]Y\\&&
 =(-1)^{p+n}t^{2p-1}x^2(t^{2n}-t^{-2n})\left[\sum_{k=0}^{2n-1}t^{2k}S_{n-k}(y)+\sum_{k=1}^{2p-2}t^{-2k}S_{n+k}(y)\right],                       
   \end{eqnarray*}
   Transform this into
      \begin{eqnarray*}
&&			[t^{4n}(S_{2p+2n-2}(x)+S_{2p+2n-4}(x))
			+S_{2p-2n-2}(x)+S_{2p-2n-4}(x)
\\&&-(-1)^p\sum_{j=1-p}^{p-2}(-1)^{j}x^2S_{2j-2n}(x)]Y\\&&
 =(-1)^{p+n}t^{2p-1}x^2(t^{2n}-t^{-2n})\left[\sum_{k=0}^{2n-1}t^{2k}S_{n-k}(y)+\sum_{k=1}^{2p-2}t^{-2k}S_{n+k}(y)\right],                      
                \end{eqnarray*}
      then into
  \begin{eqnarray*}
&&[t^{4n}(S_{2p+2n-2}(x)+S_{2p+2n-4}(x))+S_{2p-2n-2}(x)+S_{2p-2n-4}(x)\\&&-S_{2p+2n-2}(x)
-S_{2p+2n-4}(x)-S_{2p-2n-2}(x)-S_{2p-2n-4}(x)]Y\\
&&=(-1)^{p+n}t^{2p-1}x^2(t^{2n}-t^{-2n})\left[\sum_{k=0}^{2n-1}t^{2k}S_{n-k}(y)+\sum_{k=1}^{2p-2}t^{-2k}S_{n+k}(y)\right].
  \end{eqnarray*}
  This can be simplified as
  \begin{eqnarray*}
    &&(S_{2p+2n-2}(x)+S_{2p+2n-4}(x))Y\\
     &&=(-1)^{p+n}t^{2p-2n-1}x^2\left[\sum_{k=0}^{2n-1}t^{2k}S_{n-k}(y)+\sum_{k=1}^{2p-2}t^{-2k}S_{n+k}(y)\right].                       
                \end{eqnarray*}
  And now we check that the sequence $S_n(y)$
  defined by the recursive relation from   the statement of
  Theorem~\ref{mainthm} satisfies this identity by induction on $n$. 
  
  Set
  \begin{eqnarray*}
A_n=\sum_{k=1}^{2n-1}t^{2k}S_{n-k}(y)+\sum_{k=1}^{2p-2}t^{-2k}S_{n+k}(y). 
  \end{eqnarray*}
  Then
  \begin{eqnarray*}
    &&A_{n+1}-t^2A_n=t^{-4p+4}S_{n+2p-1}(y)+t^{4p+2}S_{-n}(y)\\
    &&=t^{-4p+4}S_{n+2p-1}(y)-t^{4p+2}S_{n-2}(y)\\
    &&    =t^{-2p+2n+3}(t^{-2p-2n+1}S_{n+2p-1}(y)-t^{2p+2n-1}S_{n-2}(y))\\
    &&=(-1)^{p+n-1}t^{-2p+2n+2}S_{2n+2p-2}(x)Y, 
  \end{eqnarray*}
  where for the last equality we used the identity from the statement of the theorem.
  We are attempting to prove that
  \begin{eqnarray*}
[S_{2n+2p-2}(x)+S_{2n+2p-4}(x)]Y=(-1)^{p+n}t^{2p-2n}x^2A_n
    \end{eqnarray*}
  holds for all $n$ and as  induction step we assume that this equality holds
  for $n$ and prove it for $n+1$. But if we add the equality for $n$ to the one
  for $n+1$ we obtain something obvious, as the following computation shows:
  \begin{eqnarray*}
    &&[S_{2n+2p}(x)+2S_{2n+2p-2}(x)+S_{2n+2p-4}(x)]Y=x^2S_{2n+2p-2}(x)Y\\
    &&=(-1)^{p+n+1}t^{2p-2n-2}x^2(-1)^{p+n-1}t^{-2p+2n+2}S_{2n+2p-2}(x)Y\\
    &&=
    (-1)^{p+n}t^{2p-2n-2}x^2[-A_{n+1}+t^2A_n].
  \end{eqnarray*}
  The induction  is now complete because the base case was checked in
  \cite{gelcasain}.

\section{A recurrence polynomial for the colored Jones polynomials
  of  $y$}\label{recforjones}

In this section we will explain how the recurrence polynomial for the
colored Jones polynomials of the curve $y$ is constructed.
The $n$th  colored Jones polynomial
of a knot $K$ in $S^3$ has been introduced in \cite{witten} as the quantized Wilson line
associated to the knot $K$ and the $n+1$-dimensional irreducible representation
of  $SU(2)$ and has been constructed
rigorously in \cite{kirillovreshetikhin} and \cite{reshetikhinturaev} using the $n+1$st irreducible representation of the
quantum group of $SU(2)$. It is known that the $n$th colored Jones polynomial of the knot
$K$ is
\begin{eqnarray}\label{jokau}
J(K,n)=  (-1)^n\left<S_n(K)\right>
\end{eqnarray}
where $\left<\cdot \right>$ is the Kauffman bracket. The equality (\ref{jokau}) can be placed 
in the setting of skein modules as follows. 

Alongside with the Kauffman bracket skein module $K_t(M)$ of a manifold $M$ one can introduce the skein module of the Reshetikhin-Turaev theory, which was denoted
by $RT_t(M)$ and was defined in \cite{geluri} as
follows. As in the case of the Kauffman bracket skein module we consider the
free ${\mathbb C}[t,t^{-1}]$-module with bases the isotopy classes of
framed links in $M$. To obtain $RT_t(M)$ we factor this module by
the submodule generated by three families of elements.

\begin{figure}[h]
  \centering
   \scalebox{.4}{\input{crossing.pstex_t}}

\caption{}
\label{crossing}
\end{figure}
The first family consists of elements of the form
\begin{eqnarray*}
L\cup O-(t^2+t^{-2})L,
\end{eqnarray*}
where $L\cup O$ consits of the link $L$ to which  a trivial
knot component is added. 
The second family consists of elements of the form
\begin{eqnarray*}
L-tH-t^{-1}V,
  \end{eqnarray*}
where $L,H,V$ are links that are identical except inside an embedded ball
in which they look as depicted in Figure~\ref{crossing}, and additionally,
the crossing in this figure comes from {\em different} link components of
$L$.
The third family consists of elements of the form
\begin{eqnarray*}
L-\epsilon(tH-t^{-1}V),
\end{eqnarray*}
where again $L,H,V$ are links that are identical except inside an embedded ball
in which they look as depicted in Figure~\ref{crossing}, but this
time the crossing in the figure is the {\em self-crossing} of a link component of $L$ and $\epsilon$ is the sign of that crossing. These skein relations
have been derived in \cite{kirbymelvin} for the Reshetikhin-Turaev
theory.

Let $S^3$ be the 3-dimensional sphere. In $K_t(S^3)$ one has $$S_n(K)-\left<S_n(K)\right>\emptyset=0,$$ and in
$RT_t(S^3)$ one has $$S_n(K)-J(K,n)\emptyset=0;$$ moreover, as explained
in \cite{annrazvan}, in $RT_t(S^3)$ one has
$S_n(K)-(-1)^n\left<S_n(K)\right>\emptyset=0$.
This gives the skein theoretical explanation for the equality (\ref{jokau}).
In fact it is the cabling principle stated in \cite{kirbymelvin} that
implies that the $n$th colored Jones polynomial of a knot $K$ in $S^3$ is
the polynomial that is the coefficient of the empty link when
we write the skein $S_n(K)\in RT_t(M)$ in the basis consisting of the empty link. 

We extend the definition of the colored Jones polynomial to a knot $K$ in some
arbitrary 3-dimensional oriented manifold $M$ by stating that this polynomial
is the skein $S_n(K)$ in the skein module $RT_t(M)$. Of course if
$RT_t(M)$ is a free module and we know a basis, then we can express
$S_n(K)$ in that basis and we obtain a family of actual polynomials, but
this is less relevant for us at this moment, because we are actually
concerned with how $S_n(K),$ $n\in {\mathbb Z}$, relate to one another.

As a consequence of Theorem~2.1 in \cite{annrazvan} (see also \S 3.2 in the
same paper) the 
colored Jones polynomials of the curve $y$  satisfy 
\begin{eqnarray*}
t^{-2n-1}S_{p+n}(y)-t^{2n+1}S_{p-n-1}(y)=S_{2n}(x)(t^{-1}S_p(y)-tS_{p-1}(y)),
\end{eqnarray*}
(in the new context the factor $(-1)^n$ dissapears).
If we add the corresponding
relations for $n+1$ and $n-1$ and subtract the one for $n$ multiplied by $T_2(x)=x^2-2$ we obtain
the {\em homogeneous} recursive  relation
\begin{eqnarray*}
  &&t^{-2n-3}S_{p+n+1}(y)-t^{2n+3}S_{p-n-2}(y)-t^{-2n-1}(x^2-2)S_{p+n}(y)\\
  &&+t^{2n+1}(x^2-2)S_{p-n-1}(y)+t^{-2n+1}S_{p+n-1}(y)-
    t^{2n-1}S_{p-n}(y)=0.
\end{eqnarray*}
We  rewrite this as
\begin{eqnarray*}
  &&t^{-2n-3}S_{n+p+1}(y)+t^{2n+3}S_{n-p}(y)-t^{-2n-1}(x^2-2)S_{n+p}(y)\\
  &&-t^{2n+1}(x^2-2)S_{n-p-1}(y)+t^{-2n+1}S_{n+p-1}(y)
  +t^{2n-1}S_{n-p-2}(y)=0.
\end{eqnarray*}
We want to associate to this relation a polynomial following the guidelines
of \cite{garle}. Let us first construct
a coefficient ring for these polynomials.

If $K$ is a knot and $N(K)$ is an open regular neighborhood of $K$ in $S^3$,
then the module $RT_t(S^3\backslash N(K))$ has also a module structure over
the skein algebra of the boundary,  $RT_t(\partial N(K))$, defined by
gluing the cylinder over the boundary to the knot complement.
But  $\partial N(K)$ is the 2-dimensional torus ${\mathbb T}^2$,
and so $RT_t(S^3\backslash N(K))=RT_t({\mathbb T}^2)$. It has been shown
in \cite{geluri} that the latter is canonically isomorphic to $K_t({\mathbb T}^2)$ and the multiplicative structure of this algebra has been exhibited in
\cite{frohmangelca}. When identifying the boundary of the knot complement
with the standard torus, 
we require that the curve $(1,0)$ in ${\mathbb T}^2$ is identified with
the longitude of the knot $K$ while the curve $(0,1)$ is identified with
the meridian of the knot.
Note that $(0,1)\cdot \emptyset=x$, where $x$ is the curve
from Figure~\ref{basistorknot}.
If we consider the subalgebra of $RT_t({\mathbb T}^2)$ generated by $(0,1)$,
then this is the same as the polynomial ring $R={\mathbb C}[t,t^{-1}][x]$,
and we think of $x$ as both the variable of the polynomial ring and
as the curve $x$.
Now we view $RT_t(S^3\backslash N(K))$ as an $R$-module.

For a function 
\begin{eqnarray*}
f:{\mathbb Z}\rightarrow RT_t(S^3\backslash N(K)), 
\end{eqnarray*}
following \cite{garle} we define the operators
\begin{eqnarray*}
(Mf)(n)=t^{2n}f(n), \quad (Lf)(n)=f(n+1). 
\end{eqnarray*}
The operators $L$ and $M$ and their inverses generate  the ring ${\mathcal T}_x$
which is the quotient
\begin{eqnarray*}
R[L,L^{-1},M,M^{-1}]/(LM=t^2ML, LL^{-1}=L^{-1}L=1, MM^{-1}=M^{-1}M=1)
\end{eqnarray*} 
A {\em recurrence polynomial} of the function $f$ is an element $P\in {\mathcal T}_x$ such that
$Pf=0$.

Turning to our particular example we notice that
a recurrence polynomial in $R[L,L^{-1}, M,M^{-1}]$  for the function 
\begin{eqnarray*}
f:{\mathbb Z}\rightarrow S_n(y),
\end{eqnarray*}
whose values are the colored Jones polynomials of the curve $y$ in the complement of 
the $(2p+1,2)$ torus knot, is
\begin{eqnarray*}
  t^{-3}L^{p+1}M^{-1}+t^3L^{-p}M-t^{-1}(x^2-2)L^pM^{-1}-t(x^2-2)L^{-p-1}M\\+tL^{p-1}M^{-1}
  +t^{-1}L^{-p-2}M.
\end{eqnarray*}
We can then find a recurrence polynomial in the variables $L$ and $M$ only, namely 
\begin{eqnarray*}
 && t^{2p+5}L^{p+2}M( t^{-3}L^{p+1}M^{-1}+t^3L^{-p}M-t^{-1}(x^2-2)L^pM^{-1}\\&&-t(x^2-2)L^{-p-1}M
  +tL^{p-1}M^{-1}
  +t^{-1}L^{-p-2}M)
  \\&&=L^{2p+3}+t^{4p+8}L^{2}M^2-t^4(x^2-2)L^{2p+2}-t^{4p+8}(x^2-2)LM^2\\
  &&+
  t^8L^{2p+1}+t^{4p+8}M^2\\
  &&=L^{2p+3}-t^4(x^2-2)L^{2p+2}+t^8L^{2p+1}+t^{4p+8}[L^2-(x^2-2)L+1]M^2\\
  &&=[L^2-t^4(x^2-2)L+t^8]L^{2p+1}+t^{4p+8}[L^2-(x^2-2)L+1]M^2.
\end{eqnarray*}

\begin{remark}
  When $t=1$ (the ``classical'' setting of the Reshetikhin-Turaev theory
  as opposed to $t=-1$ for the Kauffman bracket) this polynomial factors
  as $$(L^2-(x^2-2)L+1)(L^{2p+1}+M^2).$$ 
  \end{remark}

\end{document}